\documentclass[11pt]{article}
\usepackage[margin=1in]{geometry}
\usepackage{amsmath,amssymb,amsthm}
\usepackage{graphicx}
\usepackage{enumitem}
\usepackage{microtype}
\usepackage[hidelinks]{hyperref}

\newtheorem{theorem}{Theorem}
\newtheorem{lemma}{Lemma}
\newtheorem{question}{Question}

\title{Euclidean vs Graph Metric: The Fixed-Source Problem}
\author{Itai Benjamini}
\date{}

\begin{document}
\maketitle

\begin{abstract}
We prove that two fixed sources in the Euclidean plane can be realized by a bounded-degree planar
unit-edge graph on a $10$-net, with graph distance from each source agreeing with Euclidean distance
up to a universal additive constant. We ask whether the analogous statement holds for three
non-collinear sources, and prove a logarithmic obstruction for large ordered source sets in the
coordinate-planar setting.
\end{abstract}

\section{Introduction}

All graph edges below have graph length one. A subset $N\subset \mathbb R^2$ is called a $10$-net if every
point of $\mathbb R^2$ lies within Euclidean distance $10$ of some point of $N$.

Let
\[
S=\{p_1,\ldots,p_m\}\subset \mathbb R^2.
\]
A fixed-source version of the slack-Euclidean problem~\cite{PPS90,Radin94,Benjamini13} asks whether one can find
 a bounded-degree planar graph $G$, drawn in the plane, whose vertex set $N$ is a $10$-net containing $S$, and such that
\[
 d_G(p_i,x)=|p_i-x|+O(1)
\]
for every $x\in N$ and every source $p_i\in S$. The additive constant is required to be independent of scale.

One source is easy: a radial bounded-degree tree gives the required distances. The purpose of this note is to
record that two sources are also possible. We also record a complementary obstruction: in
coordinate-planar settings there is no uniform bounded-additive theorem for arbitrary finite source
sets. The obstruction already appears for large ordered source families.

\begin{theorem}\label{thm:main}
For every two points $p,q\in\mathbb R^2$ there exists a bounded-degree planar graph $G$, drawn in
$\mathbb R^2$, whose vertex set is a $10$-net containing $p$ and $q$, such that
\[
 d_G(p,x)=|p-x|+O(1),\qquad d_G(q,x)=|q-x|+O(1)
\]
for every vertex $x$ of $G$. The constants are universal.
\end{theorem}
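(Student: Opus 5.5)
Our plan is to use elliptic coordinates adapted to the pair $\{p,q\}$. After a rigid motion we may assume $p=(-a,0)$ and $q=(a,0)$ with $a\ge 0$. If $a\le C_0$ for a large universal $C_0$, then $|p-x|$ and $|q-x|$ differ by at most $2a$. In that case a single radial bounded-degree tree rooted at $p$ works for both sources, once we attach $q$ to $p$ by a path of length $O(a)$. So the real content is the case of large $a$. We will build the graph out of two families of curves. The first family is the confocal ellipses $E_s=\{x:|p-x|+|q-x|=s\}$, $s\ge 2a$. The second is the confocal hyperbolas $H_t=\{x:|p-x|-|q-x|=t\}$, $|t|\le 2a$. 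Along a hyperbola $H_t$ both $|p-x|$ and $|q-x|$ grow monotonically, and at the same rate $ds/2$ in the sum parameter. This is the key point: a path that moves outward along a hyperbola increases both distances simultaneously.

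\textbf{Construction.} We put vertices on the ellipses $E_{s_k}$ with $s_k=2a+k$ for integers $k\ge 0$. On each ellipse we place points spaced at Euclidean distance about $1$ (between $1/2$ and $1$). Consecutive points on an ellipse are joined, so each ellipse becomes a cycle made of unit-length graph edges. This cycle structure lets a path move along the level set $\{|p-x|+|q-x|=s\}$. The radial edges follow hyperbolas. Each point on $E_{s_k}$ is joined to a nearby point on $E_{s_{k+1}}$ lying close to the same hyperbola. We use a branching rule, as in the one-source radial tree: when the perimeter grows by a factor of $2$, each vertex gets two children. This keeps the degree bounded and the drawing planar, since the edges stay inside annuli between consecutive ellipses and never cross. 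The degenerate innermost ellipse $E_{2a}$ is the segment $[p,q]$, and we realize it as a path of unit edges. The resulting vertex set is a $1$-net, hence a $10$-net.

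\textbf{Distance estimates.} Lower bounds are automatic, because every edge has Euclidean length at most $1$. Hence $d_G(p,x)\ge|p-x|-O(1)$, and likewise for $q$. For the upper bound from $p$ to a vertex $x$ on $E_s\cap H_t$, we take three steps:
\begin{enumerate}[label=(\roman*)]
\item walk along the segment $[p,q]$ from $p$ to the foot of the hyperbola $H_t$, namely the point with $|p-y|=(2a+t)/2$, at cost $(2a+t)/2$;
\item climb radially along the path tracking $H_t$, at cost $(s-2a)+O(1)$;
\item absorb the final discrepancy with $O(1)$ edges.
\end{enumerate}
At $x$ we have $|p-x|=(s+t)/2$. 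The three steps above cost $(2a+t)/2+(s-2a)$, which exceeds $|p-x|$ by $(s-2a)/2$. So this naive path fails, and the radial climb has to be more efficient. The correct observation is that $d|p-x|=ds/2$ holds along the hyperbola only in terms of the sum parameter, while the Euclidean length of the hyperbola arc from $E_s$ to $E_{s+ds}$ is larger than $ds/2$ except asymptotically. We must therefore instead route along the curves of steepest ascent of $|p-\cdot|$. These are rays from $p$, and they differ from the rays from $q$.

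\textbf{Main obstacle.} The main obstacle is reconciling the two radial structures. Rays from $p$ and rays from $q$ cross each other, and a planar bounded-degree graph cannot contain both radial trees. Our approach is to split the plane by the perpendicular bisector of $[p,q]$ into two half-planes. On $p$'s side we use a radial tree from $p$. There, $q$-geodesics pass near $p$ at cost $O(1)$ over straight-line distance only when $x$ is behind $p$. Elsewhere we must appeal to the fact that a ray from $q$ into $p$'s half-plane is nearly parallel to some ray from $p$ far away. The hard estimate is then to show that switching between the two trees along a bounded number of ellipse arcs costs only $O(1)$ in total, and not $O(\log a)$. We would try to do this by making the transitions along the bisector and the axis, using the hyperbolic foliation as an interpolating grid whose angle defect decays like $1/\text{distance}$, and summing these defects. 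We expect this summation, showing that the total defect is bounded, to be the crux of the proof.
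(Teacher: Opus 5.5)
You start from the right coordinates and the right observation (along a hyperbola both distances grow at the same rate), and your bounded-$a$ reduction is the paper's. But the proposal does not contain a proof.

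The graph you write down is not what you claim. With $s_k=2a+k$, the ellipses $E_{2a}=[p,q]$ and $E_{2a+1}$ are $\sqrt{a+1/4}$ apart on the bisector. In general $E_{2a+k}$ and $E_{2a+k+1}$ are about $\sqrt{a}(\sqrt{k+1}-\sqrt{k})$ apart there. So for all $k$ up to a small multiple of $a$:
\begin{itemize}
\item there is no ``nearby'' point on the next ellipse;
\item the vertex set is not a $1$-net (for large $a$, not even a $10$-net);
\item the lower bound you call automatic rests on Euclidean edge length at most $1$, so it does not apply to your own graph.
\end{itemize}
The upper bound fails, as you observe. The repair you propose (radial trees from $p$ and $q$ glued along the bisector, with a summable angle defect) is only announced, and the estimate you yourself call the crux is exactly what is missing.

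The missing idea, which is the core of the paper's proof, is to get the lower bound from the linearization \eqref{eq:linearization} used combinatorially, not from Euclidean edge lengths. The theorem puts no bound on the Euclidean length of an edge. Put $D=2a$, $h=(s-2a)/2$ and $\tau=(2a+t)/2$, so that $|p-x|=\tau+h$ and $|q-x|=2a-\tau+h$. Then:
\begin{itemize}
\item Cut the $(\tau,h)$ half-strip into unit cells, i.e.\ consecutive ellipses at $s$-spacing $2$, not $1$. Your spacing of $1$ is exactly what produced the excess $(s-2a)/2$.
\item Put a $5$-dense cloud in each confocal cell.
\item Join only clouds of adjacent cells, by order-preserving ladders. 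These have bounded degree because adjacent cells have comparable area.
\end{itemize}
The cell labels $A=i+j$ and $B=2a-i+j$ change by at most one along every edge, which gives both lower bounds at once. Your naive route (along $[p,q]$ to the foot of the column, then up one cell per edge) now has length $\tau+h+O(1)$ from $p$ and $2a-\tau+h+O(1)$ from $q$. No rays from $p$ or $q$ are needed.

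Your sense that planarity is the delicate point is still sound, and it affects the paper's argument too. In the paper's comb, every point of a bottom-row cloud is an endpoint of ladders going left, right and up, and near the middle of $[p,q]$ such a cloud has order $\sqrt{a}$ points. Two disjoint left-to-right cross-cuts of a cell already cut the lower point off from the top side. So the corridor argument of Lemma~\ref{lem:clouds}(iv) is too quick.

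One repair is to split the plane by the circle with diameter $[p,q]$. Inside it, $|\nabla\tau|=\sin(\angle pxq/2)>1/\sqrt2$, so cells are thin in $\tau$; use horizontal ladders only there. Outside it, cells are thin in $h$; use vertical ladders only there. Join the two systems along the staircase of bounded-size cells meeting that circle: rows are entered from $p$ and $q$ through this staircase, and the columns stand on it.
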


The proof is based on confocal coordinates for the two foci $p$ and $q$. The graph is a thickened
confocal grid: large confocal cells carry large clouds of vertices, and adjacent clouds are connected by
noncrossing monotone ladders.

\section{Confocal coordinates}

After a translation and rotation, write
\[
p=(0,0),\qquad q=(D,0),
\]
where first we assume that $D$ is a large integer. The cases of bounded $D$ and non-integral $D$ are handled at the end.

For $x\in\mathbb R^2$, define
\[
t(x)=\frac{D+|x-p|-|x-q|}{2},\qquad
h(x)=\frac{|x-p|+|x-q|-D}{2}.
\]
Then
\[
0\le t(x)\le D,\qquad h(x)\ge 0,
\]
and, crucially,
\begin{equation}\label{eq:linearization}
|x-p|=t(x)+h(x),\qquad |x-q|=D-t(x)+h(x).
\end{equation}
Thus the two distance functions are linear in the coordinates $(t,h)$.

For $0\le t\le D$ and $h\ge 0$, the corresponding point in the upper or lower half-plane is
\begin{equation}\label{eq:param}
X=t+\frac{(2t-D)h}{D},\qquad
Y=\pm \frac{2}{D}\sqrt{t(D-t)h(D+h)}.
\end{equation}
Hence the curves $h=\mathrm{constant}$ are confocal ellipses and the curves $t=\mathrm{constant}$ are confocal hyperbolas.

\begin{figure}[ht]
\centering
\includegraphics[width=.84\textwidth]{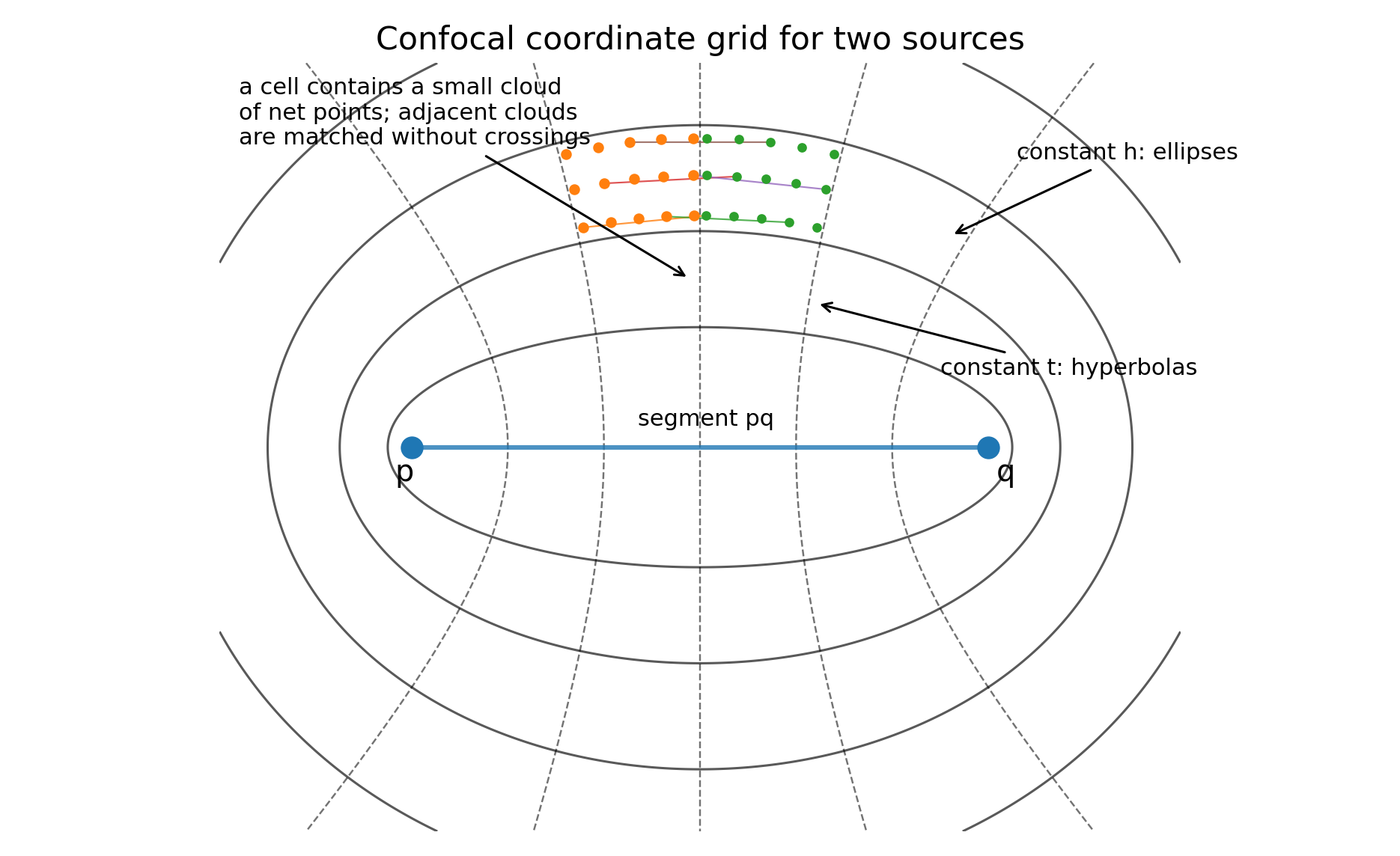}
\caption{A confocal coordinate grid for the two sources $p$ and $q$. The graph is built by thickening this grid.}
\label{fig:confocal}
\end{figure}

We now divide the $(t,h)$ half-strip into unit rectangles
\[
R_{i,j}=[i,i+1]\times[j,j+1],\qquad 0\le i<D,\qquad j=0,1,2,\ldots.
\]
The image of such a rectangle under~\eqref{eq:param} is called a \emph{confocal cell}. Except on the line $h=0$, each rectangle has an upper and a lower copy.

\section{How the short graph paths look}

Before giving the full construction, let us describe the shape of the short path from $p$ to a vertex
$x$ in a confocal cell $(i,j)$.  The distance proof below shows that such a path is a shortest path up to a universal additive error.  The path has two components of motion.

\begin{enumerate}[label=(\arabic*)]
\item A \emph{horizontal motion} in the bottom row $j=0$, moving from $p$ to the correct column $i$.
\item A \emph{vertical motion} in the fixed column $i$, climbing from level $0$ to level $j$.
\end{enumerate}

Thus this short path reaches the correct column first, and then climbs through successive clouds in that
column. Since the first phase uses about $i$ steps and the second phase uses about $j$ steps, its total length is
about $i+j$, in agreement with~\eqref{eq:linearization}.

The reason the clouds become larger as one climbs in a fixed column is simple: the Euclidean image of a confocal
cell may become wider, so more vertices are needed in that cell in order for the full vertex set to remain a $10$-net.
The next figure shows a typical column, the noncrossing ladders between successive clouds, and one such short path from
$p$ to a vertex $z$ in the top cloud.

\begin{figure}[ht]
\centering
\includegraphics[width=.92\textwidth]{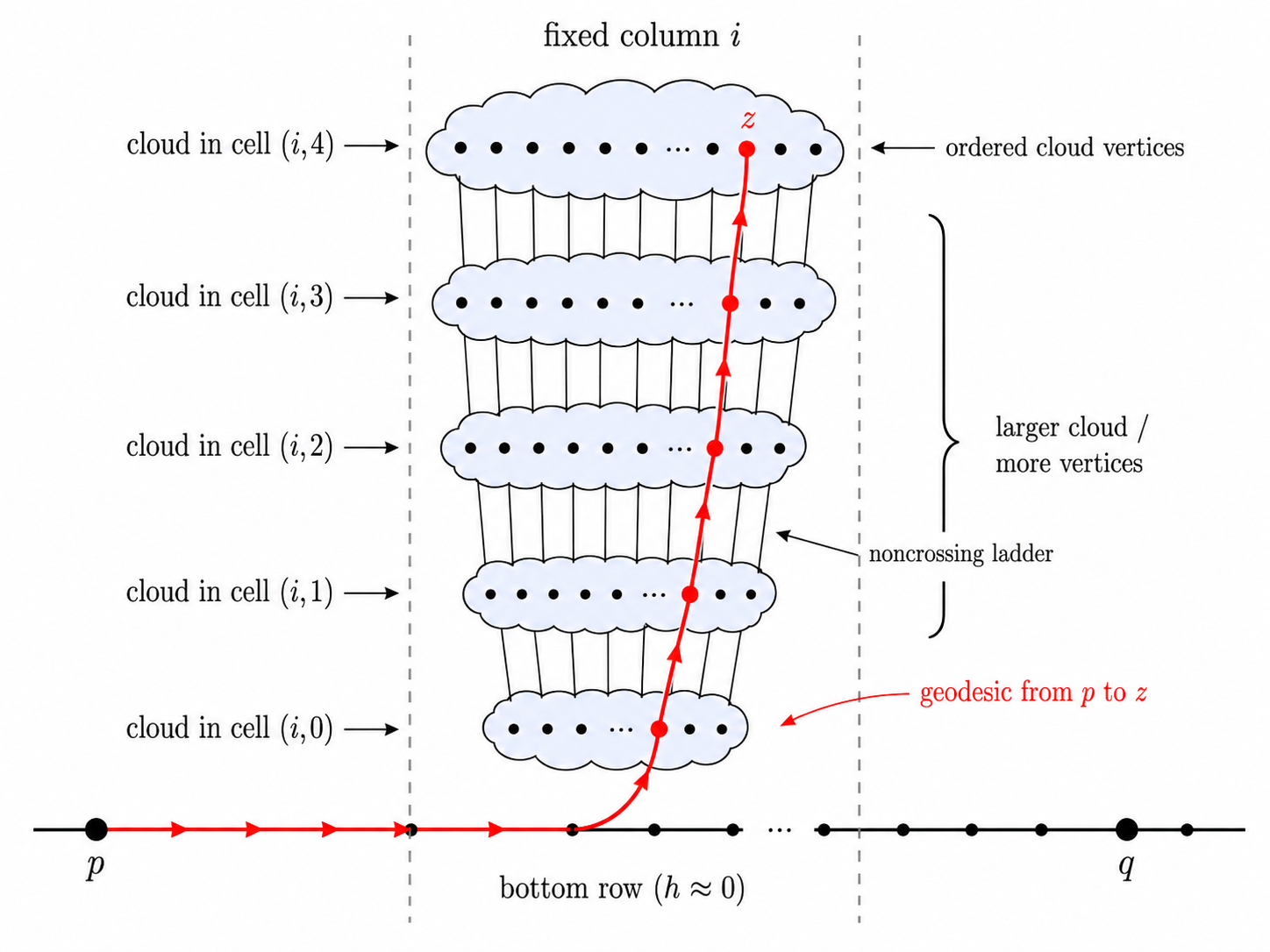}
\caption{A fixed column. The clouds may increase in size as the Euclidean cells become larger. A shortest path up to $O(1)$ from $p$ first moves along the bottom row and then climbs through the noncrossing ladders to reach a chosen vertex $z$.}
\label{fig:column}
\end{figure}

\section{The combinatorial ladders}

The planar connections between neighboring clouds are built from a simple ordered-ladder lemma.

\begin{lemma}[Ordered ladder]\label{lem:ladder}
Let $P$ and $Q$ be two finite linearly ordered sets satisfying
\[
C_0^{-1}|P|\le |Q|\le C_0|P|.
\]
Place $P$ and $Q$ on opposite sides of a rectangle, in their given orders. Then there is a noncrossing
bipartite graph between $P$ and $Q$ such that every vertex is incident to at least one edge and the maximum degree is bounded by a constant depending only on $C_0$.
\end{lemma}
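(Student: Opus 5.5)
We build the ladder explicitly from a monotone ``staircase'' matching. Write $P=\{a_1<\dots<a_m\}$ and $Q=\{b_1<\dots<b_n\}$, and assume by symmetry that $m\le n$, so that $n\le C_0 m$. Place the $a_k$ on the bottom side and the $b_l$ on the top side of the rectangle, in increasing order from left to right. Every edge is a straight segment across the rectangle from some $a_k$ to some $b_l$.

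\textbf{Step 1: the noncrossing criterion.} Two segments $a_kb_l$ and $a_{k'}b_{l'}$ cross in the interior of the rectangle exactly when $(k-k')(l-l')<0$. So a bipartite edge set is noncrossing if and only if its edges, viewed as pairs $(k,l)$, form a chain in the product order on $\{1,\dots,m\}\times\{1,\dots,n\}$. Edges sharing an endpoint meet only at that endpoint, which is allowed.

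\textbf{Step 2: a lattice path.} Take the monotone staircase path in the grid $\{1,\dots,m\}\times\{1,\dots,n\}$ from $(1,1)$ to $(m,n)$ in which each step increases exactly one coordinate by one. Its vertex set is a chain, so by Step 1 the corresponding edges are noncrossing. Because the path starts at $(1,1)$, ends at $(m,n)$ and moves by unit steps, every index $k\in\{1,\dots,m\}$ and every $l\in\{1,\dots,n\}$ occurs. Hence every vertex of $P$ and of $Q$ is incident to at least one edge.

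\textbf{Step 3: choosing the path to bound degrees.} The degree of $a_k$ equals the number of path vertices with first coordinate $k$. To keep these numbers small, we follow the discretized diagonal: $a_k$ is joined to every $b_l$ with
\[
\lfloor (k-1)n/m\rfloor+1\le l\le \lceil kn/m\rceil .
\]
These intervals cover $\{1,\dots,n\}$, consecutive intervals overlap in at most one index, and each interval has length at most $n/m+2\le C_0+2$. So $\deg a_k\le C_0+2$. Each $b_l$ lies in at most two consecutive intervals when $n\ge m$, because the intervals are ``fat'' and meet only at their shared endpoints. So $\deg b_l\le 2$.

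Adjacent intervals share at most one endpoint, so the edge set is exactly the vertex set of a monotone staircase. It is therefore noncrossing by Steps 1 and 2. The case $m>n$ is symmetric, and swapping the roles of $P$ and $Q$ gives the bound $C_0+2$ for the maximum degree.

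The only point needing care is Step 3. We must check that the rounding in the interval endpoints really yields a chain, meaning the intervals are nondecreasing and each one starts no later than the previous one ends. We must also check that the coverage is complete. Both facts follow from the monotonicity of $k\mapsto \lfloor kn/m\rfloor$ and $k\mapsto\lceil kn/m\rceil$, together with $\lceil (k-1)n/m\rceil\ge\lfloor (k-1)n/m\rfloor$. These are routine.
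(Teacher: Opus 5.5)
Your proof is correct and is essentially the paper's construction: your $I_k$ is exactly the set of $l$ for which $[(k-1)/m,k/m]$ and $[(l-1)/n,l/n]$ overlap in an interval of positive length, and that is the paper's proportional-interval rule. Your open-overlap version is actually the right reading, because with the paper's closed intervals a common breakpoint $r/m=s/n$ makes both $p_rq_{s+1}$ and $p_{r+1}q_s$ edges, which cross; in that case your consecutive intervals are disjoint and your path takes a diagonal step rather than a unit step, which is harmless since only the chain property is used.
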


\begin{proof}
Write $P=\{p_1,\ldots,p_m\}$ and $Q=\{q_1,\ldots,q_n\}$ in order. Associate to $p_r$ the interval
$[(r-1)/m,r/m]$ and to $q_s$ the interval $[(s-1)/n,s/n]$ in $[0,1]$. Connect $p_r$ to $q_s$ whenever the two intervals intersect.
Then the edges are order-preserving and hence can be drawn without crossings. Every interval intersects at least one interval from the other partition, so every vertex has a neighbor. Since $m/n$ is bounded above and below in terms of $C_0$, the degrees are bounded in terms of $C_0$.
\end{proof}

\section{Clouds in the confocal cells}

We next place vertices in the confocal cells.  The geometric point to check is that neighboring cells require comparable numbers of net points.  This follows from the explicit parametrization.  Put
\[
        u=\frac{2t}{D}-1,
        \qquad
        v=1+\frac{2h}{D}.
\]
Then the confocal map is the standard elliptic coordinate map
\[
        X-\frac D2=\frac D2uv,
        \qquad
        Y=\pm\frac D2\sqrt{(1-u^2)(v^2-1)}.
\]
In these coordinates the Euclidean metric is
\begin{equation}\label{eq:elliptic-metric}
        ds^2=
        \frac{v^2-u^2}{1-u^2}\,dt^2+
        \frac{v^2-u^2}{v^2-1}\,dh^2 .
\end{equation}
The coefficients have only the endpoint singularities
\[
        t^{-1/2},\qquad (D-t)^{-1/2},\qquad h^{-1/2}
\]
when side lengths are integrated over a unit interval.  These singularities are integrable, and their integrals over adjacent unit intervals are comparable by a universal constant.  Consequently, the Euclidean side lengths and areas of the images of adjacent unit rectangles are comparable up to a universal factor.  In particular, the number of Euclidean balls of radius $2$ needed to cover one confocal cell differs from the corresponding number for any adjacent confocal cell by at most a universal factor.  The finitely many cells in the bounded exceptional region where $D$ is small or where the first and last partial columns are merged are handled by changing the constants.

\begin{lemma}[Cloud realization]\label{lem:clouds}
There is a choice of finite clouds $P_C$ inside the confocal cells $C$ such that:
\begin{enumerate}[label=(\roman*)]
\item the union of all clouds, together with $p$ and $q$, is a $10$-net of the plane;
\item if two confocal cells are adjacent, then their cloud sizes are comparable by a universal factor;
\item the cloud in each cell is ordered in each direction in which it is connected to a neighboring cloud;
\item the ladders between adjacent clouds can be drawn in pairwise disjoint thin corridors near the relevant cell sides, so that all ladders together form a planar bounded-degree graph;
\item for every ladder from a preceding cloud to a following cloud, each vertex of the following cloud has at least one predecessor in the preceding cloud.
\end{enumerate}
\end{lemma}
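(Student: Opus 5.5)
We will build each cloud $P_C$ as a maximal $2$-separated subset of the confocal cell $C$ and then modify it slightly near the sides, so that properties (i) and (ii) come from the metric comparability established just before the lemma, and (iii)--(v) come from Lemma~\ref{lem:ladder}.

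\textbf{Step 1: the net and cloud sizes.} In each cell $C$, choose a maximal $2$-separated subset $N_C\subset C$.
\begin{itemize}[label={}]
\item By maximality, every point of $C$ is within distance $2$ of $N_C$. Since the cells cover the plane, the union of the $N_C$ is a $2$-net, which gives (i) with room to spare.
\item By the packing bound, $|N_C|$ is comparable to $\max(1,\mathrm{area}(C))$, up to boundary corrections controlled by the side lengths of $C$.
\item The discussion after \eqref{eq:elliptic-metric} shows that adjacent cells have comparable side lengths and areas. Hence the covering numbers are comparable, which gives (ii).
\end{itemize}

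\textbf{Step 2: ordering and corridors.} For each side $\sigma$ of $C$ shared with a neighbor $C'$, the ladder should not be built on all of $N_C$, since that would not be planar in general. Instead we designate a \emph{port set} $P_{C,\sigma}$ as follows.
\begin{itemize}[label={}]
\item Take points spaced about $2$ apart along a curve parallel to $\sigma$, at confocal distance $\varepsilon$ inside $C$. Include them in $N_C$, re-choosing the rest of the maximal separated set.
\item This set is naturally ordered along $\sigma$, which gives (iii). Its size is comparable to $\mathrm{length}(\sigma)$.
\item By the comparability of side lengths, $|P_{C,\sigma}|$ and $|P_{C',\sigma}|$ are comparable. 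So Lemma~\ref{lem:ladder} applies with a universal $C_0$ and gives a noncrossing bounded-degree bipartite graph. We draw it in the thin strip between the two port curves. Distinct sides give disjoint strips, except near cell corners, where we shorten the port curves slightly.
\item The interior points of $N_C$ must be connected to the ports with bounded degree and planarly. We do this with a bounded-degree planar tree or triangulation inside $C$, for instance a Delaunay-type graph of the separated set, which has bounded degree because the set is $2$-separated and $4$-dense.
\end{itemize}
Together these give the planarity and bounded-degree claims in (iv).

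\textbf{Step 3: the predecessor property (v).} Lemma~\ref{lem:ladder} guarantees that every vertex on each side has a neighbor. This already gives (v) for the port vertices. It does not give (v) for the whole following cloud in the ``every vertex of the cloud'' sense, so we interpret the ladder as ending at the ports and route other cloud vertices through the internal graph. For the later distance argument, however, the internal paths must cost only $O(1)$ in hop count. This fails for huge cells, whose diameter is large.

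\textbf{Main obstacle.} Step 3 is where the difficulty lies, because inside a large cell the graph distance to interior points must still track $t+h$ to within $O(1)$. The fix I would try first is to exploit the fact that a confocal cell has unit size in $(t,h)$ and can be long only in the direction transverse to the gradients of $t$ and $h$. In that direction the Euclidean distance functions vary by $O(1)$ along the cell.

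So I would arrange $N_C$ entirely as the port chains, about $O(1)$ parallel level curves $h=j+k/K$, with consecutive chains joined by ladders from Lemma~\ref{lem:ladder}. In a thin cell this keeps the required $10$-density. For a cell that is wide in both directions, the metric coefficients show that this happens only for bounded $(t,h)$ cells near the foci or the segment, and these are handled by constants. Then every vertex of the following cloud lies on a chain reached by a ladder from the preceding cloud, each step of which increases $h$ or $t$ by about $1/K$. This gives (v) literally, and every vertex is at hop distance $O(1)$ beyond the previous cloud.

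The remaining verification is that these chains, spaced $1/K$ apart in $h$, are $10$-dense. This needs $\sqrt{(v^2-u^2)/(v^2-1)}\lesssim K$ on non-exceptional cells, which I expect to hold away from $h\lesssim 1$. I expect this density check to be the real crux.
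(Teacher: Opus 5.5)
\textbf{The gap.} Your Step~1 matches the paper's choice of clouds, but the fallback in Step~3 fails for two independent reasons.

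\emph{Hop accounting.} The theorem needs $d_G(p,x)=i+j+O(1)$ with the $O(1)$ uniform in $j$. So passing from level $j-1$ to level $j$ must cost a single edge for all but boundedly many levels; ``$O(1)$ hops beyond the previous cloud'' is not enough. With $K\ge 2$ chains per cell joined chain-to-chain, every path from the bottom row to cell $(i,j)$ must cross about $Kj$ chains. Hence $d_G(p,x)\ge Kj-O(1)$ while $|p-x|\approx i+j$, an error that grows linearly in $j$. For the same reason (v) does not hold literally: a vertex on an inner chain has its predecessor in its own cell, not in the preceding cloud.

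\emph{Density.} The density check you flag is false. At $t\approx D/2$,
\[
\sqrt{\frac{v^2-u^2}{v^2-1}}\approx\frac{D+2h}{2\sqrt{h(D+h)}}\approx\frac12\sqrt{D/h}\qquad(1\le h\ll D).
\]
This is unbounded on the whole range $h=o(D)$, not just for $h\lesssim 1$. The cells $(i,j)$ there are about $1$ wide and $\tfrac12\sqrt{D/j}$ tall, i.e.\ long \emph{across} the level curves of $h$, contrary to your heuristic. There are unboundedly many such cells, so they cannot be absorbed into constants. Reorienting the chains along level curves of $t$ does not rescue the scheme unless every vertex of a chain gets its own edge from the cloud below. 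Any scheme that enters a cell at ports or a first chain and then routes inside the cell breaks in these tall cells, including your Delaunay variant.

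\textbf{What the paper does instead.} The paper does not route inside cells at all. It applies Lemma~\ref{lem:ladder} to the \emph{entire} clouds of consecutive cells, so every vertex of the following cloud is one edge from the preceding cloud. Since an edge has graph length one however long its drawn curve is, a tall cell costs nothing. Then (v) holds literally and induction on $j$ gives $i+j+O(1)$.

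For column cells, which carry only a lower and an upper ladder, this is planar. Fix one order of the cloud, thread its points by disjoint curves from the lower side to the upper side, and use that order for both ladders. So your worry that whole-cloud ladders are ``not planar in general'' is unfounded in the columns.

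\textbf{Where your planarity worry is right.} It is justified at the bottom row, and the paper's remark about two disjoint corridors does not resolve it. Take $1\le i\le D-2$. Lemma~\ref{lem:ladder} gives every vertex of the upper cloud $P_{(i,0)}$ a neighbor in each of three pairwise disjoint connected sets:
\begin{enumerate}[label=(\alph*)]
\item the upper clouds in columns $<i$;
\item the upper clouds in columns $>i$;
\item column $i$ above row $0$.
\end{enumerate}
Each set is connected because each ladder of Lemma~\ref{lem:ladder} is a connected bipartite graph. Contracting the three sets shows that any three vertices of $P_{(i,0)}$ yield a $K_{3,3}$ minor. Moreover $|P_{(i,0)}|\asymp\sqrt D$ for $i$ near $D/2$, so the graph is not planar for large $D$.

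Adopting whole-cloud ladders therefore repairs your Step~3 in the columns. The junction of the bottom row with the columns still needs a different design, on either route.
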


\begin{proof}
For each confocal cell $C$, let $M(C)$ be the smallest number of Euclidean balls of radius $2$ needed
to cover $C$. By the estimate preceding the lemma, $M(C)$ and $M(C')$ are comparable by a universal
factor whenever $C$ and $C'$ are adjacent. Choose a finite set $P_C\subset C$ of cardinality comparable
to $M(C)$ which is $5$-dense in $C$; for example, take a maximal $2$-separated set and then change its
size, if necessary, by only a bounded factor. This gives a $10$-net and gives adjacent clouds with
comparable cardinalities.

Each non-exceptional cell is a topological quadrilateral. For a vertical ladder, order the cloud by its
position along the common horizontal side; for a horizontal bottom-row ladder, order the cloud by its
position along the common vertical side. A bottom-row cloud may carry both a horizontal order and a
vertical order. This causes no conflict: reserve inside each bottom-row cell two disjoint narrow
corridors, one for the horizontal ladder along the bottom row and one for the vertical ladder leaving the
cell. Away from the bottom row only vertical column ladders are used. The exceptional cells near $p$
and $q$ are replaced by a fixed finite planar gadget, so they do not affect the constants.

Now apply Lemma~\ref{lem:ladder} to every adjacent pair of clouds that must be connected. Since
adjacent cloud sizes are comparable, every ladder has uniformly bounded degree. Since every ladder is
order-preserving and is drawn inside its own corridor, different ladders are disjoint except at their
intended endpoint vertices. Hence the total drawing is planar and has bounded degree. Finally, the
construction in Lemma~\ref{lem:ladder} makes every vertex of the following cloud incident to at least
one edge from the preceding cloud. This is the predecessor property used below to construct the short
paths from $p$ and $q$.
\end{proof}

\section{Proof of the theorem}

We now construct the graph $G$.

\smallskip
\noindent\textbf{Bottom row.}
For $j=0$, connect the clouds in consecutive cells $(i,0)$ and $(i+1,0)$ by the ordered ladder lemma, separately in the upper and lower half-planes. Connect the first clouds near $p$ to $p$ by boundedly many edges, and the last clouds near $q$ to $q$ by boundedly many edges.

\smallskip
\noindent\textbf{Vertical columns.}
For each fixed $i$ and every $j\ge 1$, connect the cloud in cell $(i,j-1)$ to the cloud in cell $(i,j)$ by the ordered ladder lemma, again separately in the upper and lower half-planes.

All edges have graph length one.

A vertex in a cell with indices $(i,j)$ is assigned two labels
\[
A=i+j,\qquad B=D-i+j.
\]
By~\eqref{eq:linearization}, for every vertex $x$ in this cell we have
\begin{equation}\label{eq:ABmetric}
|x-p|=A+O(1),\qquad |x-q|=B+O(1).
\end{equation}

Every graph edge either moves horizontally between consecutive first-row cells or vertically between consecutive cells in a fixed column. Therefore each edge changes both labels $A$ and $B$ by at most one. It follows that every path from $p$ to $x$ has length at least $A-O(1)$, and every path from $q$ to $x$ has length at least $B-O(1)$.

For the reverse inequalities, we use the reachability built into the ladders. From $p$, first move along the bottom row to column $i$. Since every vertex in each first-row cloud has a predecessor in the preceding cloud, every vertex in the first-row cloud of column $i$ is reachable in $i+O(1)$ steps. Then climb the column using the vertical ladders. Again, every vertex in the cloud at level $j$ has a predecessor in the cloud at level $j-1$, so induction on $j$ yields a path from $p$ to every vertex in cell $(i,j)$ of length
\[
i+j+O(1)=A+O(1).
\]
Similarly, starting from $q$ and moving along the bottom row from the other side yields a path of length
\[
D-i+j+O(1)=B+O(1)
\]
from $q$ to every vertex in cell $(i,j)$.

Combining these bounds with~\eqref{eq:ABmetric}, we obtain
\[
d_G(p,x)=A+O(1)=|p-x|+O(1),
\]
and
\[
d_G(q,x)=B+O(1)=|x-q|+O(1).
\]
This proves Theorem~\ref{thm:main} for large integral $D$.

If $D$ is non-integral, replace $D$ by $\lceil D\rceil$ in the labels and merge the final partial column with the previous one; this changes only the additive constants. If $D$ is bounded, use a one-source radial construction from $p$ and connect $q$ to $p$ by a path of bounded length. Since $|p-q|=O(1)$, the distance estimates from $q$ follow from those from $p$, up to another universal additive constant. Thus the theorem holds in full generality.

\section{The next fixed-source question}

The two-source construction uses the special fact that the two Euclidean distance functions from $p$ and $q$
become linear in the two coordinates $(t,h)$. There is no analogous two-coordinate linearization for three non-collinear sources.

\begin{question}[Three fixed sources]
Let $p_1,p_2,p_3$ be three non-collinear points in $\mathbb R^2$. Does there exist a bounded-degree planar graph $G$, drawn in the plane, whose vertex set is a $10$-net containing $p_1,p_2,p_3$, such that
\[
d_G(p_i,x)=|p_i-x|+O(1)
\]
for every vertex $x$ and for $i=1,2,3$?
\end{question}

More generally, for which finite source sets $S\subset\mathbb R^2$ can one build a bounded-degree planar graph on a net that approximates the distance functions from all sources in $S$ up to bounded additive error?

\section{Many ordered sources: a logarithmic obstruction}

The two-source theorem does not extend uniformly to arbitrary finite source sets in settings where the
Euclidean drawing gives the planar order used below.  The obstruction is the same Monge-discrepancy mechanism that appears in the
usual planar boundary-distance argument.

We formulate the statement in a way that isolates the topological input.  Say that two ordered source
families
\[
        a_0,\ldots,a_{L-1},\qquad b_0,\ldots,b_{L-1}
\]
have the \emph{planar order property} if for every $i,j$ a shortest path from $a_i$ to $b_{j+1}$ and a
shortest path from $a_{i+1}$ to $b_j$ meet.  Then swapping tails gives
\begin{equation}\label{eq:monge}
        D(i,j)+D(i+1,j+1)\le D(i,j+1)+D(i+1,j),
\end{equation}
where $D(i,j)=d_G(a_i,b_j)$.  This property holds, for example, in the usual coordinate-planar situation
when the four endpoints occur in crossed order on the boundary of a topological rectangle and the relevant
geodesics stay inside that rectangle.

\begin{theorem}[A logarithmic obstruction for many ordered sources]\label{thm:many-sources}
There is a constant $c>0$ with the following property.  For arbitrarily large $k$ there is a set of
$k$ points $S\subset\mathbb R^2$ such that the following holds.  Suppose $G$ is a unit-edge graph drawn
planarly in the same Euclidean plane, $S\subset V(G)$, the two ordered subfamilies of $S$ defined below have the planar
order property, and
\[
        |d_G(s,x)-|s-x||\le C
\]
for every $s\in S$ and every vertex $x$ of $G$.  Then
\[
        C\ge c\log k .
\]
\end{theorem}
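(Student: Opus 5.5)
The plan is to exploit the fact that \eqref{eq:monge} telescopes. Summing the local inequality over all $0\le i<I$, $0\le j<J$ gives the block inequality
\[
D(0,0)+D(I,J)\le D(0,J)+D(I,0)
\]
for every rectangular block of indices. If $|d_G(s,x)-|s-x||\le C$ for all $s\in S$, then each $D(i,j)$ differs from the Euclidean distance $E(i,j)=|a_i-b_j|$ by at most $C$. The block inequality then yields
\[
E(0,0)+E(I,J)-E(0,J)-E(I,0)\le 4C .
\]
So it suffices to exhibit, for arbitrarily large $k$, ordered families $a_0,\ldots,a_{L-1}$ and $b_0,\ldots,b_{L-1}$ with $2L\le k$ whose Euclidean distance matrix violates the Monge inequality at some corner by at least $c'\log k$. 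Then $C\ge (c'/4)\log k$, which is the claim.

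The construction I would try is multiscale. The sources are placed so that the Euclidean ``anti-Monge'' excess $E(i,j)+E(i+1,j+1)-E(i,j+1)-E(i+1,j)$ is positive. This is the crossed situation in which the order property forces the graph geodesics to meet while Euclidean geometry prefers the uncrossed pairing. Concretely, I would put $a_i$ on one horizontal line and $b_j$ on a parallel line at dyadic positions $\pm 2^i$, with one family in reversed order, so that the relevant distance is a convex function of a sum $x_i+y_j$ rather than a difference. Each dyadic scale $2^m$ contributes a cross difference of order one, for example through a normalization in which the vertical offset is comparable to the horizontal scale at each level. Since the cross differences add exactly under telescoping, the total corner excess is $\sum_m \Theta(1)=\Theta(L)$. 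Rescaling so that only the $\log k$ dyadic levels genuinely contribute gives $\Theta(\log k)$.

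I would carry this out in three steps. First, compute the second mixed difference of $f(s)=\sqrt{H^2+s^2}$ along the chosen points; this is essentially $\int\!\!\int f''$ over the corresponding index rectangle. Second, check that the contribution of each dyadic block is bounded below by a universal constant. Third, confirm that this configuration is one in which the hypothesis ``the two ordered subfamilies have the planar order property'' is meaningful. Since the theorem takes that property as an assumption, I only need to define the two subfamilies explicitly and do not have to verify it from planarity.

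The main obstacle is the calibration in the first two steps. The sources must be chosen so that the Euclidean Monge excess is neither bounded, which gives no obstruction, nor linear in the scale, which would make the statement vacuous or suggest the configuration is not compatible with the order property. The goal is precisely logarithmic growth, with each of $\asymp\log k$ dyadic scales contributing a uniformly positive amount. I would first try the two-parallel-lines configuration with geometric spacing and verify the per-scale lower bound via the explicit formula $f''(s)=H^2(H^2+s^2)^{-3/2}$. If that fails, I would pass to points on a circle arc, where the crossed-order geometry is more natural.
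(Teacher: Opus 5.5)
There is a genuine gap. Your telescoping reduction is valid, but it never uses that $d_G$ is integer-valued, and without integrality nothing can be forced. Suppose the order property is the one planarity actually supplies, i.e.\ the segments $a_0b_J$ and $a_Ib_0$ cross at some point $z$. Then the triangle inequality through $z$ gives $E(0,0)+E(I,J)\le E(0,J)+E(I,0)$. So the Euclidean matrix already satisfies every inequality you use, and a real-valued $D=E$ would be consistent with $C=0$.

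A positive Euclidean excess is therefore possible only if one family is ordered against the geometry, which is what your reversed-line construction does. There the pairs $(a_i,b_{j+1})$ and $(a_{i+1},b_j)$ are the \emph{uncrossed} ones; your sentence about ``the crossed situation'' has this backwards. So the assumed order property is not a consequence of planarity. In that regime the excess is typically of the order of the diameter: already $a_0=(0,0)$, $a_1=(W,0)$, $b_0=(W,1)$, $b_1=(0,1)$ give excess about $2W$. The literal statement then holds for a trivial reason and says nothing about planar graphs. Your instinct that linear growth is suspicious is right, but tuning the excess down to $\log k$ would not fix this, because the hypothesis would still be imposed against the geometry.

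The tuning also fails on its own terms. With a single offset $H$, the dyadic block at scale $2^m$ contributes about $\min(4^m/H,\,H^2/2^m)$. This peaks at $2^m\approx H$ and decays geometrically on both sides, so the total is comparable to its largest term. It is never a sum of $\asymp\log k$ terms of order one; two parallel lines cannot have the offset comparable to every scale.

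The missing idea is integrality plus discrepancy. The paper keeps the natural orientation, $a_i=(0,i)$ and $b_j=(N,N+j)$ with $L=\lfloor N^{2/3}\rfloor$. There the Euclidean mixed difference is $\mu_E=\alpha_N+O(L/N^2)$ with $\alpha_N\asymp 1/N>0$, so $E$ satisfies \eqref{eq:monge} with only slight slack.

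Because $G$ has unit edges, $\mu(i,j)=D(i,j+1)+D(i+1,j)-D(i,j)-D(i+1,j+1)$ is a nonnegative \emph{integer}. So $\mu$ is a point configuration of total mass $\asymp N^{1/3}$. Its sum over any index rectangle $R$ telescopes to four corner values, hence equals $\alpha_N|R|+O(C+1)$. A nonnegative integer measure cannot imitate the fractional density $\alpha_N$ this closely: Schmidt's lower bound for rectangle discrepancy forces $C+1\gtrsim\log N^{1/3}\asymp\log k$. The logarithm comes from Schmidt's theorem, not from summing contributions over dyadic scales.
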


\begin{proof}
Let $N$ be large and put $L=\lfloor N^{2/3}\rfloor$.  Take the source set
\[
        a_i=(0,i),\qquad i=0,\ldots,L-1,
\]
and
\[
        b_j=(N,N+j),\qquad j=0,\ldots,L-1.
\]
Thus $k=2L$.  Since all $a_i$ and $b_j$ are sources, the assumed distance estimate gives
\[
        D(i,j):=d_G(a_i,b_j)
        =E(i,j)+O(C),
\]
where
\[
        E(i,j)=|a_i-b_j|=\sqrt{N^2+(N+j-i)^2}.
\]
By the planar order property, $D$ satisfies the Monge inequality~\eqref{eq:monge}.  Hence
\[
        \mu(i,j)=D(i,j+1)+D(i+1,j)-D(i,j)-D(i+1,j+1)
\]
is a nonnegative integer measure on the index square.

For the Euclidean matrix $E$, the corresponding mixed difference
\[
        \mu_E(i,j)=E(i,j+1)+E(i+1,j)-E(i,j)-E(i+1,j+1)
\]
has size comparable to $1/N$ throughout this $L\times L$ square.  More precisely, Taylor expansion gives
\[
        \mu_E(i,j)=\alpha_N+O(L/N^2),
        \qquad \alpha_N\asymp 1/N .
\]
Therefore, for every axis-parallel rectangle $R$ in the index square,
\[
        \sum_R \mu_E=\alpha_N |R|+O(1).
\]
On the other hand, rectangle sums of $\mu$ telescope to four corner values of $D$, and the same is true
for $\mu_E$.  Since $D=E+O(C)$, we get
\[
        \left|\sum_R \mu-\alpha_N |R|\right|\le O(C+1)
\]
for every such rectangle $R$.

The total mass is
\[
        \alpha_N L^2\asymp N^{1/3}.
\]
Thus $\mu$ is a nonnegative integer measure of mass tending to infinity whose discrepancy for
axis-parallel rectangles is $O(C+1)$.  Schmidt's logarithmic lower bound for rectangle discrepancy, in
one of its standard forms as recorded for instance in Matou\v{s}ek's book~\cite{Matousek99}, implies
\[
        C+1\ge c'\log(N^{1/3}).
\]
Since $k=2L\asymp N^{2/3}$, this gives $C\ge c\log k$ after changing the absolute constant.
\end{proof}

The theorem shows that there is no uniform positive answer for all finite source sets in any class of
planar embeddings where large ordered source families force the Monge inequality.  It does not address
the fixed case of three non-collinear sources.

\section*{Acknowledgments}
The author thanks ChatGPT for useful help.

\end{document}